\documentclass[11pt,oneside,pdftex]{amsart}
\usepackage{geometry}   
\usepackage{wrapfig}
\usepackage{color,amsmath,amsfonts,amssymb,amsthm,color,mathtools}
\usepackage{cancel}
\usepackage{tensor}
\usepackage{tikz-cd}
\usepackage{bbm}
\usepackage{mathrsfs}
\usepackage{subfiles}
\usepackage{fancyhdr}
\usepackage{graphicx}
\usepackage{enumitem}
\usepackage{hyperref}

\usepackage{ifthen}

\newtheorem{thm}{Theorem}[section]
\newtheorem{lemma}[thm]{Lemma}
\newtheorem{lem}[thm]{Lemma}
\newtheorem*{lemma*}{Lemma}

\newtheorem{cor}[thm]{Corollary}

\newtheorem{conj}[thm]{Conjecture}

\theoremstyle{definition}
\newtheorem{defn}[thm]{Definition}

\newtheorem{rem}[thm]{Remark}

\newtheorem{prob}[thm]{Problem}

\newcommand{\integers}{\mathbb{Z}}

\newcommand{\field}{\mathbb{F}}

\DeclareMathOperator{\stab}{Stab}

\DeclarePairedDelimiterX{\inner}[2]{\langle}{\rangle}{#1 , #2}
\DeclarePairedDelimiterX{\pres}[2]{\langle}{\rangle}{#1 \delimsize\vert #2}

\begin{document}

\title
[Residually finite groups that do not virtually have UPP]
{Residually finite groups that do not virtually have the unique product property}

\author{Naomi Bengi}
           \address{Dept. of Mathematics\\
                    Weizmann Institute of Science}
           \email{naomi.bengi@weizmann.ac.il}

\author[D.~T.~Wise]{Daniel T. Wise}
\email{daniel.wise@weizmann.ac.il}
\subjclass[2020]{
20F65, 
20E26
}
\keywords{Unique Product Property, Promislow Group, Residually Finite
}
\date{\today}
\thanks{}

\begin{abstract}
We construct  a finitely generated residually finite group $G$ with the property  that every finite index subgroup of $G$ contains a subgroup isomorphic to Promislow's group. Hence $G$ does not have a finite index subgroup with the unique product property.
\end{abstract}

\maketitle

\section{Introduction}
\label{sec:intro}

\begin{defn}[Persistently contains]
Let $P$ be a group.
A group $G$ \emph{persistently contains $P$}
if for every finite index subgroup $G'\subset G$,
there is a subgroup $H\subset G'$ with $H\cong P$.
\end{defn}

For instance, if $G$ has a finite index subgroup isomorphic to $\integers^r$ then $G$ persistently contains $\integers^r$.
Likewise for persistently containing a rank~$r$ free group.
Of course, if $G$ has no proper finite index subgroups, then $G$ persistently contains every subgroup of $G$,
so we will focus on the case where $G$ is residually finite.

An easy group with notable properties is \emph{Promislow's group}
which is presented by:
$$P = \langle x,y \ \mid \ x^{-1}y^2xy^2 \ , \ y^{-1}x^2yx^2 \rangle$$
The above presentation hides the simple structure of $P$. It has an index~4 subgroup isomorphic to $\integers^3$. In fact, $P$ arises as an amalgamated product, actually a `double': 
$$P\ \cong \ K\underset{T}{*} K$$
where $K$ is $\pi_1$ of the Klein-bottle, and $T\subset K$ is the index~2 subgroup isomorphic to  $\integers^2$. One quickly sees that $P$ is torsion-free and virtually $\integers^3$ from this viewpoint.

A group $G$ has  the \emph{unique product property} (UPP)
if for any nonempty finite subsets $A,B\subset G$,
there is a pair $(a,b)\in A\times B$ such that $ab\neq a'b'$ whenever $(a',b')\in A\times B -\{(a,b)\}$.
If $G$ satisfies the UPP then $G$ satisfies Kaplansky's ``zero-divisor conjecture'', which asserts that $\field G$ has no zero-divisors when $\field$ is a field and $G$ is torsion-free.  
Promislow's group was the second type of torsion-free group that failed to satisfy UPP, the first examples were discovered shortly before in \cite{RipsSegev87}.

Recently, Gardam showed that $P$ also fails to satisfy Kaplansky's ``unit conjecture'' \cite{Gardam2021} which asserts that the only units in $\field G$ are of the form $kg$ with $k\in \field-0$ and $g\in G$.

The  goal of this note is to show: 
\begin{thm}
    There is a finitely generated residually finite torsion-free group that persistently contains Promislow's group.
\end{thm}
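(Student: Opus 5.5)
The plan is to embed $P$ in a residually finite group $G$ in which every finite quotient sends $x$ and $y$ to elements of \emph{odd} order. Then a self-embedding of $P$ given by odd powers produces a copy of $P$ inside any finite index subgroup.

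\emph{Step 1: odd-power self-embeddings of $P$.} For odd $n$, define $\phi_n:P\to P$ by $x\mapsto x^n$, $y\mapsto y^n$. This is well defined. The relator $x^{-1}y^2xy^2$ says that conjugation by $x$ inverts $y^2$, so conjugation by $x^n$ with $n$ odd inverts $y^{2n}$; hence $x^{-n}y^{2n}x^ny^{2n}=1$, and symmetrically for the other relator. The map $\phi_n$ is also injective. Its image contains $\phi_n$ of the index~4 subgroup $\integers^3$, which has finite index there, so the image has Hirsch length~3. Thus $\ker\phi_n$ has Hirsch length~$0$, i.e.\ is finite, and so is trivial because $P$ is torsion-free. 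Consequently, if $H\subset G'$ is any subgroup containing $x^n$ and $y^n$ for some odd $n$, then $H\supseteq\phi_n(P)\cong P$.

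\emph{Step 2: the construction.} Let $B_1=\langle a,s\mid sas^{-1}=a^2\rangle$ and $B_2=\langle c,r\mid rcr^{-1}=c^2\rangle$ be two copies of $BS(1,2)$. Set
$$G=\big(P\underset{x=a}{*}B_1\big)\underset{y=c}{*}B_2 .$$
Here $\langle x\rangle,\langle y\rangle\subset P$ are infinite cyclic, since $P$ is torsion-free. The group $G$ is finitely generated, and it is torsion-free as an iterated amalgam of torsion-free groups.

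Now let $G'\subset G$ have finite index, and let $N\subset G'$ be its normal core, with $Q=G/N$. In $Q$ the image $\bar a$ is conjugate to $\bar a^2$, so $\bar a$ and $\bar a^2$ have the same order, which forces that order to be odd; likewise for $\bar c$. Let $n$ be the product of the orders of $\bar x=\bar a$ and $\bar y=\bar c$. Then $n$ is odd and $x^n,y^n\in N\subset G'$. By Step~1, $G'\supseteq\phi_n(P)\cong P$, so $G$ persistently contains $P$.

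\emph{Step 3: residual finiteness (the main obstacle).} I will use the standard criterion for amalgams over cyclic subgroups: $A*_CB$ with $C=\langle c\rangle$ infinite cyclic is residually finite if $A$ and $B$ are residually finite and, for cofinally many $m$ in some set $S$, there are finite-index normal subgroups $A_m\lhd A$, $B_m\lhd B$ with $A_m\cap C=B_m\cap C=\langle c^m\rangle$, where these filtrations are cofinal enough to separate elements. In our setting $S$ is the set of odd $m$.

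For $BS(1,2)$ this is immediate. It maps onto $\integers/m\rtimes\integers/k$ with $a\mapsto 1$, where $k$ is the order of $2$ mod $m$, and the kernel meets $\langle a\rangle$ exactly in $\langle a^m\rangle$.

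For $P$ I need, for odd $m$, a finite-index normal subgroup meeting $\langle x\rangle$ in exactly $\langle x^m\rangle$. For the second amalgamation I need the same simultaneously for $\langle y\rangle$, compatibly. The amalgam over $\langle y\rangle$ is taken after the first amalgam, so I will have to run the criterion with the factor $P*_{\langle x\rangle}B_1$. I will get these by mapping $P$ to finite quotients of $P/\ker$ through its $\integers^3$ structure. Concretely, I will use the characteristic subgroup $\phi$-images, or the subgroups generated by $m$-th powers of the $\integers^3$ lattice, and then check that the intersection with $\langle x\rangle$ is exactly $\langle x^m\rangle$ when $m$ is odd. The parity matters here, because $x^2$ lies in the lattice while $x$ does not. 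This bookkeeping is where I expect the real work to be. If it is awkward, the fallback is to replace the cyclic amalgams by HNN extensions of $P$ with stable letters conjugating $x\mapsto x^2$ and $y\mapsto y^2$, and to verify residual finiteness via the action on the Bass--Serre tree with finite quotients built from $(\integers/m)^3$-type quotients of $P$ together with permutations.
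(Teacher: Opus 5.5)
There is a genuine gap, and it is fatal rather than a matter of bookkeeping. The group $G$ of Step~2 is not residually finite, so no choice of filtrations can complete Step~3.

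Let $Q$ be any finite quotient of $G$, and let $\bar x,\bar y$ be the images of $x,y$. As you note, both have odd order. Hence $\bar x$ is a power of $\bar x^2$, and $\bar y$ is a power of $\bar y^2$. But $x^2$ and $y^2$ commute in $P$: conjugating $x^{-1}y^2x=y^{-2}$ by $x$ once more gives $x^{-2}y^2x^2=y^2$. So $\bar x$ and $\bar y$ commute, and the image of $P$ in $Q$ is abelian. The relators then read $\bar x^4=\bar y^4=1$, and odd order forces $\bar x=\bar y=1$. Thus every finite quotient of $G$ kills $P$, and $x\neq 1$ cannot be separated from the identity.

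In the language of your Step~3: the second amalgamation needs finite quotients of $P*_{x=a}B_1$ in which $y$ has order exactly $m$ for cofinally many odd $m$. But in any finite quotient of $P*_{x=a}B_1$, the element $x$ already has odd order, and by the argument above this forces $y\mapsto 1$. A single cyclic subgroup is not the problem. For instance, $P\to \integers/m\rtimes\integers/4$, with $x$ generating $\integers/m$ and $y$ acting by inversion, gives $x$ odd order $m$. The obstruction is requiring odd order for $x$ and $y$ simultaneously. Your HNN fallback, with $x\sim x^2$ and $y\sim y^2$, fails for the same reason.

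More fundamentally, the odd-power self-embedding of Step~2 can never yield persistence. The computation shows that in any residually finite group containing $P=\langle x,y\rangle$, some finite-index normal subgroup $G'$ has $x$ or $y$ of even order in $G/G'$. For that $G'$, no odd $n$ gives $x^n,y^n\in G'$.

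The paper avoids this by using infinitely many different copies $P_i=\langle K_i,\underline K_i\rangle$ inside $D(\vec n)$. Here $P_q$ is generated by $n_q$-th powers with $q!\mid n_q$, so every index-$q$ subgroup contains $P_q$. Residual finiteness then comes from doubling along separable subgroups: $\widehat H\subset F$ is separable because $\vec n$ is multiplicative, and $\widehat Y=C_G(z)$ is separable as a centralizer.

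A smaller point: in Step~1 you assert without justification that $\phi_n$ maps the $\integers^3$ subgroup onto a rank-$3$ subgroup. This is true, since in the Hantzsche--Wendt realization of $P$ the images of $x^2,y^2,(xy)^2$ are three independent translations, but it needs to be checked.
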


This answers a question of \cite{KonkeRaimbaultDunfield_2016} who asked if every finitely generated residually finite torsion-free group is virtually diffuse.
\emph{Diffuse} is a property implying UPP that was studied by Bowditch \cite{Bowditch2000}.
Recently Ng also produced residually finite groups that are not virtually diffuse \cite{Ng2025}.

We are curious about what other groups are persistently contained in residually finite groups. Perhaps the following holds: 

\begin{conj}
    Let $H$ be a finitely generated residually finite group. Then there exists a finitely generated residually finite group $G$ that persistently contains $H$.
    \end{conj}

In a positive direction, perhaps:
\begin{prob}\label{prob:linear_virtually_UPP}
Let $G$ be a f.g.\ linear group.
Does $G$ have a finite index subgroup with UPP ?
\end{prob}

\section{Doubles}
\label{sec:doubles}

This text repeatedly uses the notion of a ``double of a group along a subgroup''.
For a subgroup $H\subset G$, 
we let $\underline G$ be a copy of $G$, so there is an isomorphism $G\rightarrow \underline G$ denoted by $g\mapsto \underline g$,
and let $\underline H$ be the image of $H$ in $\underline G$.
The \emph{double} of $G$ along $H$ is the amalgamated free product $G\underset{H=\underline H }{*} \underline G$, which amalgamates $G$ and $\underline G$ by identifying $H$ with $\underline H$ using this isomorphism.
We will simply use the notation
$G\underset{H}{*} \underline G$.

The most famous example of a double is the Klein bottle group $K= \integers\underset{2\integers}{*} \underline \integers$.
And as mentioned in the introduction, Promislow's group $P$ arises as the double $P = K\underset{T}{*} \underline K$.

We will frequently use the following:
\begin{lem}\label{lem:NFT_and_doubles}
If $A\subset B$ and $C=A\cap D$
then $A\underset{C}{*} \underline A$ injects in $B\underset{D}{*} \underline B$.
\end{lem}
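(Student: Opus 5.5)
The plan is to use the standard structure of amalgamated products: normal forms together with Bass--Serre theory. The hypotheses are $A\subset B$ and $C=A\cap D$, where $D\subset B$ is the subgroup along which $B$ is doubled. There is a natural homomorphism
$$\phi\colon A\underset{C}{*}\underline A\rightarrow B\underset{D}{*}\underline B,$$
induced by the inclusions $A\hookrightarrow B$ and $\underline A\hookrightarrow \underline B$. It is well defined because the amalgamation relations $c=\underline c$ for $c\in C$ hold in the target: $C\subset D$, and on $D$ we have identified $d$ with $\underline d$. So the only task is to prove that $\phi$ is injective.

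I will use the normal form theorem for amalgamated free products. Take a nontrivial element of $A\underset{C}{*}\underline A$. Either it lies in $A$ or in $\underline A$, in which case its image is nontrivial because $A\hookrightarrow B$ is injective. Otherwise it can be written as an alternating product $g=a_1\underline{a_2}a_3\cdots$ with $n\ge 2$ factors, where each factor lies in $A-C$ or $\underline A-\underline C$ according to the alternation.

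The key step is this: an element $a\in A-C$ satisfies $a\notin D$, because $C=A\cap D$. Hence each factor $a_i\in A-C$ lies in $B-D$, and similarly each factor $\underline{a_i}\in\underline A-\underline C$ lies in $\underline B-\underline D$. The image $\phi(g)$ is therefore an alternating word of length $n\ge 2$ whose factors lie in $B-D$ and $\underline B-\underline D$ alternately. By the normal form theorem in $B\underset{D}{*}\underline B$, such a reduced word represents a nontrivial element. Thus $\phi$ has trivial kernel.

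The only real content, and the step I expect to be the main point, is the hypothesis $C=A\cap D$ rather than merely $C\subset A\cap D$. It is exactly what ensures that reduced words remain reduced after mapping. Everything else is the routine verification that $\phi$ is well defined. An equivalent route is to let $A\underset{C}{*}\underline A$ act on the Bass--Serre tree of the target and check that the induced map of trees is locally injective at vertices, which again reduces to $A\cap D=C$. I would present the normal form argument, since it is shorter.
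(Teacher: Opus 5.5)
Your proposal is correct and follows the paper's approach: the paper also appeals to the normal form theorem for amalgamated products, noting that the hypothesis $C=A\cap D$ makes normal forms in $A\underset{C}{*}\underline A$ map to normal forms in $B\underset{D}{*}\underline B$. Your write-up spells out the same argument in more detail, including well-definedness and the short-word case.
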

\begin{proof}
This holds by the normal form theorem for amalgamated products \cite{LS77}.
Indeed, the hypothesis ensures that normal forms map to normal forms.
\end{proof}

For $z\in G$, \emph{centralizer} of $z$ is  $C_G(z) = \{ g \in G \mid zgz^{-1} = g \}$.

\begin{lemma}\label{lem:the_centralizer}
Let $N \subset H \subset F$ be groups with $[H:N] = 2$. 
Consider the doubles $L = H \underset{N}{*} \underline{H}$ and $G = F \underset{N}{*} \underline{F}$. By Lemma~\ref{lem:NFT_and_doubles} we can view $L$ as a subgroup of $G$.

Choose $h \in H - N$, and let  
$\ell = h \underline{h}^{-1} \in L$.
Then  $C_G(\ell )$ is an index-2 normal subgroup of $L$, independent of the choice of $h$.
Furthermore, $C_G(\ell )$ splits internally as a direct product $N \times \langle \ell \rangle$.
\end{lemma}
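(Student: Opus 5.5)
The plan is to prove the easy inclusion $N\langle \ell\rangle\subset C_G(\ell)$ by hand, and the reverse inclusion using the Bass--Serre tree of $G$.

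\textbf{Algebraic part.} Since $[H:N]=2$, $N$ is normal in $H$, and likewise $\underline N=N$ is normal in $\underline H$. Hence $N$ is normal in $L$, and
$$L/N\cong (H/N)*(\underline H/N)\cong \integers/2*\integers/2,$$
the infinite dihedral group, generated by the images of $h$ and $\underline h$. The image of $\ell=h\underline h^{-1}$ generates the infinite cyclic index-2 subgroup. So its preimage $N\langle\ell\rangle$ is an index-2 normal subgroup of $L$. Also $N\cap\langle\ell\rangle=1$, because $\ell$ has infinite order modulo $N$.

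Next I check that $\ell$ centralizes $N$. For $n\in N$ we have $\underline h^{-1}n\underline h=\underline{h^{-1}nh}$. Since $h^{-1}nh\in N$ is identified with its underlined copy, this equals $h^{-1}nh$. Therefore $\ell n\ell^{-1}=h(h^{-1}nh)h^{-1}=n$. Thus $N\langle\ell\rangle\cong N\times\langle\ell\rangle$ is an internal direct product, and it lies in $C_G(\ell)$.

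For independence of the choice, any other choice is $h'=hn$ with $n\in N$. Then $h'\underline{h'}^{-1}=hn\underline n^{-1}\underline h^{-1}=\ell$, so $\ell$ itself does not depend on $h$. Once $C_G(\ell)=N\langle\ell\rangle$ is shown, the subgroup is therefore independent of $h$ as well.

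\textbf{Reverse inclusion.} Let $\mathcal T$ be the Bass--Serre tree of $G=F\underset{N}{*}\underline F$. Its vertices fall into two $G$-orbits, cosets of $F$ and cosets of $\underline F$. Edges are cosets of $N$, and every edge joins an $F$-vertex to an $\underline F$-vertex.

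The word $h\cdot\underline h^{-1}$ is cyclically reduced of length $2$, since $h\in F-N$ and $\underline h\in\underline F-N$. So $\ell$ is hyperbolic with translation length $2$. Its axis $\mathcal A$ passes through the vertices $\ell^k F$ and $\ell^k h\underline F$, and its edges are $gN$ with $g\in L$ (specifically $g\in\ell^k\{1,h\}$).

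Now take $g\in C_G(\ell)$. Then $g$ preserves $\mathcal A$, the unique axis of $\ell$, and commutes with the translation $\ell$. Hence $g$ cannot act on $\mathcal A$ as a reflection, so it acts as a translation. Since $G$ preserves the two vertex types, this translation is by an even amount $2k$. Then $g\ell^{-k}$ fixes $\mathcal A$ pointwise, so it lies in the stabilizer of an edge $aN$ of $\mathcal A$ with $a\in L$. That stabilizer is $aNa^{-1}=N$, because $N\trianglelefteq L$. Therefore $g\in N\langle\ell\rangle$.

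The main point needing care is this tree step: verifying that the axis is as described (so that its edge stabilizers are $L$-conjugates of $N$), and ruling out reflections along the axis. Alternatively, one could argue directly with normal forms and cyclic reduction in the amalgam, but the tree argument seems cleaner.
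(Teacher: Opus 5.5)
Your proof is correct, but it reaches the key inclusion $C_G(\ell)\subseteq N\langle\ell\rangle$ by a different route than the paper.

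For the first half the two arguments are essentially the same. The paper lets $L$ act on its Bass--Serre line, defines $M$ as the translation subgroup, and uses that $\ell$ is a minimal translation to get $M=N\times\langle\ell\rangle$. Your quotient $L/N\cong\integers/2*\integers/2$ is the algebraic form of that picture: the action of $L$ on the line factors through this dihedral group with kernel $N$.

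The real difference is in the reverse inclusion. The paper shows $C_G(\ell)\subseteq\stab_G(A)$. It then proves the stronger fact $\stab_G(A)=L$, by checking that $\stab_G(v)\cap\stab_G(A)=H$ at the two vertex types, and reads off $C_G(\ell)=C_L(\ell)=M$. That last step tacitly uses that the reflections in $L-M$ invert $\ell$ rather than centralize it. You instead use commutation with $\ell$ immediately, to force $g$ to act on the axis as an even translation. Then $g\ell^{-k}$ fixes the axis pointwise and lies in the edge stabilizer $aNa^{-1}=N$. This skips computing the full stabilizer of the axis, which the lemma does not need, and makes the exclusion of reflections explicit. The paper's route buys the extra information $\stab_G(A)=L$.

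Your remark that $h'\underline{h'}^{-1}=h\underline h^{-1}$ for every $h'\in H-N$ is also sharper than what the paper states. It shows that $\ell$ itself, not just its centralizer, is independent of the choice of $h$.
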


\begin{proof}
The Bass-Serre tree $A$ of $L=H\underset{N}{*} \underline H$ is a line.
Elements of $H-N$ and $\underline H - \underline {N}$ act by reflections, but $N$ fixes $A$.
Let $M$ be the index~2 subgroup of $L$
consisting of all translations.
Then $N\subset M$,
and $\ell \in M$ as $\ell =h\underline h^{-1}$ is the product of reflections.

First we show that $M = N \times \langle \ell  \rangle$.
Both $N$ and $\ell $ lie in $M$. That $\ell  = h \underline{h}^{-1}$ commutes with each  $n \in N$ holds since:
$$\underline{h}^{-1} n\underline{h} 
=
\underline{h}^{-1}\underline n\,\underline{h}
=
h^{-1} n h \ \ \ \implies \ \ \
(h\underline{h}^{-1})n = n(h \underline{h}^{-1}) $$


Since $\ell ^q$ is a nontrivial translation for $q\neq 0$,
we see that  $N \cap \langle \ell  \rangle$ is trivial. Thus,  $\langle N, \ell  \rangle \ \cong N \times \langle \ell  \rangle  \ \subseteq \ M$. Finally, as there are two orbits of vertices, the length~2 translation $\ell $ is a minimal length translation. Hence  any  $m\in M$ has the property that $m\ell ^q$ acts trivially for some $q\in \integers$,
and so $m\ell ^q\in N$.  Thus $M=N\times \langle \ell \rangle$.

We now show $M = C_G(\ell )$.
As $M \subseteq C_G(\ell )$ holds from  the above,  we show $C_G(\ell ) \subseteq M$.
 
Let $T$ be the Bass-Serre tree of 
$G=F\underset{N}{*} \underline{F}$.
Observe that $A$ embeds as a subtree of $T$.
Furthermore, $A$ is the axis of $s\underline s^{-1}$ for any $s \in H-N$,
and in particular $A$ is the axis of $\ell $.


We now show $C_G(\ell ) \subseteq \stab_G(A)$. Let $g \in C_G(\ell )$. The axis of
$g\ell g^{-1}$ is  $gA$. Since $g\ell g^{-1}=\ell $, their axes are identical, so $gA=A$. 

We now show $\stab_G(A) = L$.
We already know $L \subseteq \stab_G(A)$,
so it suffices to show
$\stab_G(A) \subseteq L $.
Suppose $gA=A$.
Let $v,\underline v$ be the vertices of $A$ stabilized by $F$ and $\underline F$.
Then for some $q\in \integers$, we have $g\ell ^qv=v$
or 
$g\ell ^q\underline v=\underline v$,
or both.
However, $\stab_G(v)\cap \stab_G(A)=H$,
as can be from the correspondence of edges at $v$ with cosets of $N$ in $F$.
Likewise $\stab_G(\underline v)\cap \stab_G(A)=\underline H$.
Thus $g\ell ^q\in L$ so $g\in L$.

In conclusion
$C_G(\ell ) \subseteq \stab_G(A) = L$
and hence $C_G(\ell )=C_L(\ell )=M$.
\end{proof}

\section{Preliminaries on residual finiteness}
\label{sec:profinit}

A subgroup $H\subset G$ is \emph{separable} if $H$ is the intersection of finite index subgroups of $G$.
This is equivalent to $H$ being a closed subset in the \emph{profinite topology} on $G$ which is the topology having a basis consisting of cosets of finite index subgroups of $G$.
The   standard observations follows from the definitions:
\begin{lem}\label{lem:basic_properties}
The profinite topology on $G$ is Hausdorff if $G$ is residually finite.

A homomorphism between groups is continuous in the profinite topology.
\end{lem}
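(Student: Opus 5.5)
The statement to prove is Lemma~\ref{lem:basic_properties}. Both parts follow directly from the definitions, using the basis of cosets of finite index subgroups.

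\textbf{Hausdorff.} Suppose $G$ is residually finite and let $g\neq h$ in $G$.
\begin{enumerate}
\item Since $g^{-1}h\neq 1$, residual finiteness gives a finite index normal subgroup $N\subset G$ with $g^{-1}h\notin N$. If $G$ only provides a finite index subgroup avoiding $g^{-1}h$, replace it by its normal core, which still has finite index and still avoids $g^{-1}h$.
\item The cosets $gN$ and $hN$ are basic open sets containing $g$ and $h$ respectively.
\item They are disjoint: if $gN=hN$ then $g^{-1}h\in N$, a contradiction. Distinct cosets of $N$ are disjoint, so $gN\cap hN=\emptyset$.
\end{enumerate}
Hence the profinite topology is Hausdorff.

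\textbf{Continuity.} Let $\phi\colon G\to G'$ be a homomorphism. It suffices to show that the preimage of each basic open set is open.
\begin{enumerate}
\item Take a basic open set $xK$ with $K\subset G'$ of finite index.
\item The subgroup $\phi^{-1}(K)$ has finite index in $G$. Indeed, $G/\phi^{-1}(K)$ injects as a set into $G'/K$ via $g\phi^{-1}(K)\mapsto \phi(g)K$, which is well defined and injective.
\item The preimage $\phi^{-1}(xK)$ is either empty or a single coset $g\,\phi^{-1}(K)$ for any $g$ with $\phi(g)\in xK$. To see this, note that $\phi(g')\in xK=\phi(g)K$ holds exactly when $\phi(g^{-1}g')\in K$.
\item Either way the preimage is open, so $\phi$ is continuous.
\end{enumerate}

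Neither part has a real obstacle. The only points needing care are the reduction to a normal subgroup in the Hausdorff argument and the index estimate in the continuity argument.
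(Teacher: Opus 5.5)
Your proof is correct and is exactly the direct verification from the definitions that the paper has in mind; the paper states this lemma without proof, noting only that it follows from the definitions. The passage to the normal core is harmless but unnecessary, since distinct left cosets of any finite index subgroup are already disjoint.
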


The following was explicitly proven in \cite{BolerEvans1973}, but can be deduced from \cite{Baumslag63b}.

\begin{lem}\label{lem:rf_double}
Let $G$ be  residually finite group.
Let $H\subset G$ be separable.
Then the amalgamated product
$G\underset{H}{*} \underline G$ is r.f.
\end{lem}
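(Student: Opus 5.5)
We prove that $G\underset{H}{*}\underline G$ is residually finite. By Lemma~\ref{lem:basic_properties} it suffices to handle one nontrivial element $w$ at a time: we will find a finite quotient of $G$ in which $w$ survives, after passing to a suitable finite double.

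Write $w$ in normal form $w=g_1\underline{g_2}g_3\cdots$, with each syllable lying outside $H$ or $\underline H$ (or $w$ is a single element of $G$ or $\underline G$). Since $H$ is separable, for each syllable $g_i\notin H$ there is a finite index subgroup of $G$ containing $H$ but not $g_i$. Residual finiteness of $G$ also gives, for a length-one $w$, a finite index normal subgroup missing $w$.

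Intersecting these finitely many subgroups with all their conjugates yields a finite index normal subgroup $N\trianglelefteq G$ with two properties:
\begin{itemize}
\item[(i)] $g_i\notin HN$ for every syllable $g_i$;
\item[(ii)] $N$ avoids $w$ in the length-one case.
\end{itemize}
For (i), we use that $g_i\notin HN$ whenever $N$ lies in the normal core of a finite index subgroup containing $H$ but not $g_i$. Indeed, $HN$ lies inside that subgroup, so $g_i\notin HN$.

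Let $\bar G=G/N$ and $\bar H=HN/N$. The quotient map $G\to\bar G$ is applied on both factors. It is compatible with the identification $H=\underline H$, so it induces a homomorphism
$$\phi\colon G\underset{H}{*}\underline G\longrightarrow \bar G\underset{\bar H}{*}\underline{\bar G}.$$
By (i), each syllable of $w$ maps to an element outside $\bar H$ (respectively $\underline{\bar H}$). Hence $\phi(w)$ is again in reduced form, and by the normal form theorem \cite{LS77} it is nontrivial. In the length-one case, (ii) gives $\phi(w)\neq 1$ directly.

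Finally, $\bar G\underset{\bar H}{*}\underline{\bar G}$ is an amalgam of finite groups, so it is virtually free and in particular residually finite. We can see this concretely: it acts on its Bass--Serre tree with finite vertex stabilizers. The kernel of the map to $\bar G$ given by the folding map $\underline{\bar g}\mapsto\bar g$ meets each vertex group trivially, so that kernel is free and of finite index.

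Free groups are residually finite, so a finite quotient of $\bar G\underset{\bar H}{*}\underline{\bar G}$ detects $\phi(w)$. Composing with $\phi$ gives a finite quotient detecting $w$.

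The main step is the choice of $N$ ensuring that reduced forms stay reduced. This is exactly where separability of $H$ is used, through the fact that $g\notin H$ implies $g\notin HN$ for a suitable finite index normal subgroup $N$.
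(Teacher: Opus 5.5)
Your proof is correct and follows the paper's sketch. Separability of $H$ gives a finite quotient $\bar G$ in which every syllable of the normal form stays outside $\bar H$, so $w$ survives in the virtually free double $\bar G\underset{\bar H}{*}\underline{\bar G}$. The differences are cosmetic: you handle length-one elements through the same finite double (the paper uses the folding map onto $G$), you justify virtual freeness via the kernel of the folding map, and the appeal to Lemma~\ref{lem:basic_properties} at the start is unneeded, since treating one element at a time is just the definition of residual finiteness.
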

\begin{proof}[Sketch]
Nontrivial elements of $G$ or $\underline G$, 
are handled via the homomorphism $G\underset{H}{*} \underline G \rightarrow G$, followed by the fact  that $G$ is residually finite. 
For $x\notin  G\cup \underline G$, 
consider the normal form $x_1x_2\cdots x_p$ of $x$ \cite{LS77}.
Separability of $H\subset G$
 provides a finite quotient $G\rightarrow \bar G$ separating each $x_iH$ from $H$.
The image of $x$ in  $\bar G \underset{{\bar H}}{*} \underline{\bar G}$ is nontrivial since its normal form maps to a nontrivial normal form. And $\bar G\underset{\bar H}{*} \underline{\bar G}$ is residually finite as it is virtually free.
\end{proof}

\begin{lemma}[Equalizer is Closed]\label{lem:equalizer_closed}
Let $f, g: X \to Y$ be continuous maps of topological spaces. If $Y$ is Hausdorff, then the equalizer $E = \{x \in X \mid f(x) = g(x)\}$ is a closed subset.
\end{lemma}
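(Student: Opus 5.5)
The plan is to show that the complement $X - E$ is open. Take a point $x \in X$ with $x \notin E$, so that $f(x) \neq g(x)$ in $Y$. We will produce an open neighborhood of $x$ that is disjoint from $E$.

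First, since $Y$ is Hausdorff, there are disjoint open sets $U, V \subset Y$ with $f(x) \in U$ and $g(x) \in V$. Next, since $f$ and $g$ are continuous, the preimages $f^{-1}(U)$ and $g^{-1}(V)$ are open in $X$. Both contain $x$, so
$$W = f^{-1}(U) \cap g^{-1}(V)$$
is an open neighborhood of $x$. For any $w \in W$ we have $f(w) \in U$ and $g(w) \in V$. Because $U \cap V = \emptyset$, this forces $f(w) \neq g(w)$, and hence $W \cap E = \emptyset$.

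So every point of $X - E$ has an open neighborhood contained in $X - E$. Therefore $X - E$ is open and $E$ is closed.

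No step here is a genuine obstacle; the only essential input is the Hausdorff separation in $Y$.

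In the paper, the lemma will presumably be combined with Lemma~\ref{lem:basic_properties}. For residually finite groups, the profinite topology is Hausdorff and homomorphisms are continuous. Applying the lemma to maps such as $g \mapsto z g z^{-1}$ and the identity then shows that centralizers like $C_G(\ell)$ are closed, that is, separable. It is worth noting that conjugation by $z$ is an automorphism, hence continuous in the profinite topology. This separability is what feeds Lemma~\ref{lem:rf_double}.
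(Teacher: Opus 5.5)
Your proof is correct and follows the paper's argument step for step: for $x\notin E$, Hausdorffness gives disjoint open sets $U\ni f(x)$ and $V\ni g(x)$, and $W=f^{-1}(U)\cap g^{-1}(V)$ is an open neighborhood of $x$ that misses $E$. Your remark on the application is also accurate: the paper uses this lemma through Corollary~\ref{cor:centralizer_separable} to show that $\widehat Y=C_G(z)$ is separable, and that separability is the input to Lemma~\ref{lem:rf_double}.
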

\begin{proof}
We show  $X - E$, is open. Let $x \in X -E$,
so $f(x) \neq g(x)$. As $Y$ is Hausdorff, there are disjoint  neighborhoods $U$ of $f(x)$ and $V$ of $g(x)$ in $Y$.

Let $W = f^{-1}(U) \cap g^{-1}(V)$. Then $W$ is an open neighborhood of $x$.

For any  $w \in W$, we have $f(w) \in U$ and $g(w) \in V$.  
Thus $f(w) \neq g(w)$ since $U\cap V=\emptyset$, hence $w \notin E$. Therefore, $W \subseteq X - E$. Thus $X - E$ is open.
\end{proof}

\begin{cor}\label{cor:retract_closed}
Let $G$ be a residually finite group.
Let $H\subset G$ be a retract of $G$,
in the sense that there is a homomorphism $r:G\rightarrow G$ with $H=r(G)$ and $r(h)=h$ for all $h\in H$.
Then $H\subset G$ is separable.
\end{cor}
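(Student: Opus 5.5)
We will realize $H$ as the equalizer of two continuous maps and then apply Lemma~\ref{lem:equalizer_closed}. Let $f=\mathrm{id}_G$ and $g=r$, both viewed as maps $G\rightarrow G$. Here $G$ carries its profinite topology on both the source and the target.

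By Lemma~\ref{lem:basic_properties}, both maps are continuous, since each is a homomorphism. The same lemma shows that the target $G$ is Hausdorff, because $G$ is residually finite. So Lemma~\ref{lem:equalizer_closed} applies, and the equalizer $E=\{x\in G \mid x=r(x)\}$ is closed in the profinite topology.

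Next we check that $E=H$.
\begin{itemize}
\item If $x\in H$, then $r(x)=x$ by hypothesis, so $x\in E$.
\item Conversely, if $r(x)=x$, then $x$ lies in the image $r(G)=H$.
\end{itemize}
Hence $H=E$ is closed in the profinite topology.

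Finally, a subgroup that is closed in the profinite topology is separable. To see this, take $x\notin H$. Since $G-H$ is open, it contains a basic open neighborhood $xK$ of $x$, where $K\subset G$ has finite index. The set $HK$ is a union of cosets of $K$, and $x\notin HK$: if $x=hk$ then $h=xk^{-1}\in xK\subset G-H$, which is absurd. Since $HK$ need not be a subgroup, we replace $K$ by its normal core $K_0$, which is still of finite index. Then $HK_0$ is a finite index subgroup containing $H$. Moreover $xK_0\subset xK\subset G-H$, so the same argument gives $x\notin HK_0$. Therefore $H$ is the intersection of the finite index subgroups $HK_0$, taken over all such $x$.

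The only step needing any care is this closed-implies-separable translation, which the paper asserts as the standard equivalence. There is no real obstacle here.
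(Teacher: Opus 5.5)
Your proposal is correct and follows the paper's proof: you realize $H$ as the equalizer of $r$ and $\mathrm{id}$, then apply Lemma~\ref{lem:equalizer_closed}, using Lemma~\ref{lem:basic_properties} for continuity and the Hausdorff property. Your closing argument via the normal core $K_0$ and the subgroups $HK_0$ just writes out the closed-iff-separable equivalence that the paper takes as standard.
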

\begin{proof}
This holds by Lemma~\ref{lem:equalizer_closed}
applied to the equalizer of 
$r:G\rightarrow G$ and  $id:G\rightarrow G$.
\end{proof}

\begin{cor}[Separability of Centralizers]\label{cor:centralizer_separable}
Let $G$ be a residually finite group and let $z \in G$. Then the centralizer $C_G(z)$ is a separable subgroup of $G$.
\end{cor}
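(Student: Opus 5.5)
The plan is to realize $C_G(z)$ as the equalizer of two continuous maps into a Hausdorff space, and then apply Lemma~\ref{lem:equalizer_closed}, exactly as in the proof of Corollary~\ref{cor:retract_closed}.

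Consider the maps $f,g\colon G\to G$ given by $f(x)=zxz^{-1}$ and $g(x)=x$. The map $g$ is the identity. The map $f$ is conjugation by $z$, which is an automorphism of $G$. By Lemma~\ref{lem:basic_properties}, both are continuous in the profinite topology. For $f$ one can also see this directly: conjugation by $z$ permutes the finite index subgroups of $G$, so it sends basic open cosets to basic open cosets.

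Since $G$ is residually finite, Lemma~\ref{lem:basic_properties} also says the profinite topology on $G$ is Hausdorff. So Lemma~\ref{lem:equalizer_closed} applies, and the equalizer
$$E=\{x\in G\mid zxz^{-1}=x\}=C_G(z)$$
is closed in the profinite topology.

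The last step is to pass from "closed" to "separable". The paper states that separability of a subgroup is equivalent to being closed in the profinite topology. To justify this: if $x\notin C_G(z)$, closedness gives a basic open coset $xK$ disjoint from $C_G(z)$, with $K$ of finite index. Then $x\notin C_G(z)K$. Moreover, $C_G(z)K$ is a finite union of cosets of the finite index normal core of $K$, and it contains $C_G(z)$. Taking the subgroup generated by $C_G(z)$ and that core gives a finite index subgroup containing $C_G(z)$ and missing $x$. Intersecting these subgroups over all such $x$ recovers $C_G(z)$.

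There is no real obstacle here. The only point needing care is this closed-implies-separable step, and the paper already asserts that equivalence, so I would simply cite it.
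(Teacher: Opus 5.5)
Your proof is correct and matches the paper's: $C_G(z)$ is the equalizer of conjugation by $z$ and the identity, both continuous into the Hausdorff profinite topology, so Lemma~\ref{lem:equalizer_closed} makes it closed and hence separable. Your added justification of closed $\Rightarrow$ separable is also sound, since $C_G(z)N\subseteq C_G(z)K$ for the normal core $N$ of $K$; the paper simply invokes this equivalence.
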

\begin{proof}
The profinite topology on $G$ is Hausdorff since $G$ is residually finite. 

Observe that $C_G(z)$ is the equalizer of two continuous maps from $G$ to itself:
\begin{enumerate}
    \item The inner automorphism $\psi_z: G \to G$ given by $g \mapsto zgz^{-1}$. 
    \item The identity map $\mathrm{id}: G \to G$ given by $g \mapsto g$.
\end{enumerate}
By Lemma~\ref{lem:equalizer_closed}, $C_G(z)$ is a closed subset of $G$, and hence separable.
\end{proof}

\section{A family of doubles}
A  sequence  $\vec n$ is \emph{multiplicative} if $n_{i+1}$ is a multiple of $n_i$ for each $i$.

Let $F$ be the free group on $a,b$. 
Let $\vec n =(n_i)$ be a multiplicative sequence of natural numbers.
Let $H=H(\vec n)$ be the following subgroup:
\begin{equation}\label{eq:subgroup_H}
H = \langle a^{-i} b^{n_i} a^i  \ : \ 0 \leq i \rangle.
\end{equation}

We  also use the following finitely generated subgroups $H_k=H_k(\vec n)$, 
which are clarified geometrically below:
$$H_k=
\langle a^k, \ \ a^{-i} b^{n_i} a^i   \ : \  0\leq i < k \rangle. $$

\begin{figure}
    \centering
    \includegraphics[width=0.5\linewidth]{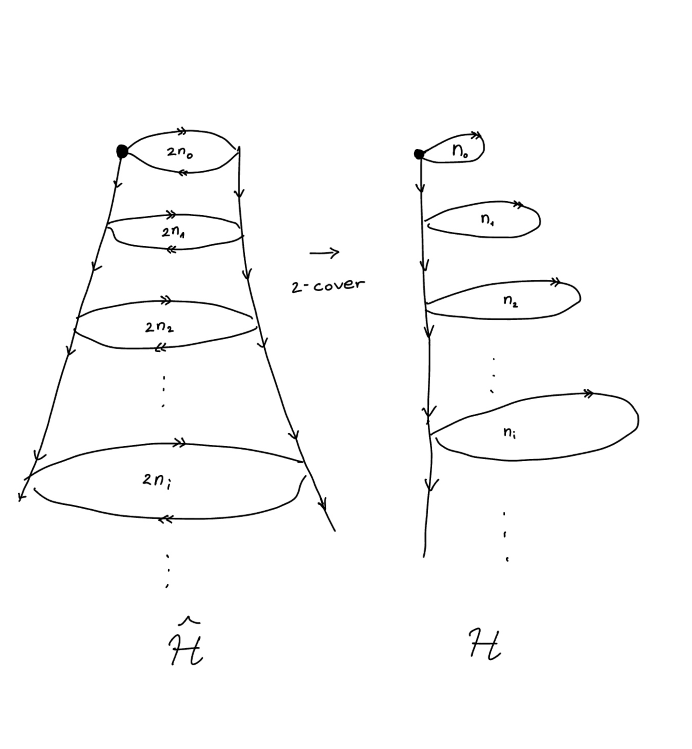}
    \caption{$\widehat{\mathcal{H}}$ is a double cover of $\mathcal{H}$}
    \label{fig:Hhat}
\end{figure}

\begin{figure}
    \centering
    \includegraphics[width=0.7\linewidth]{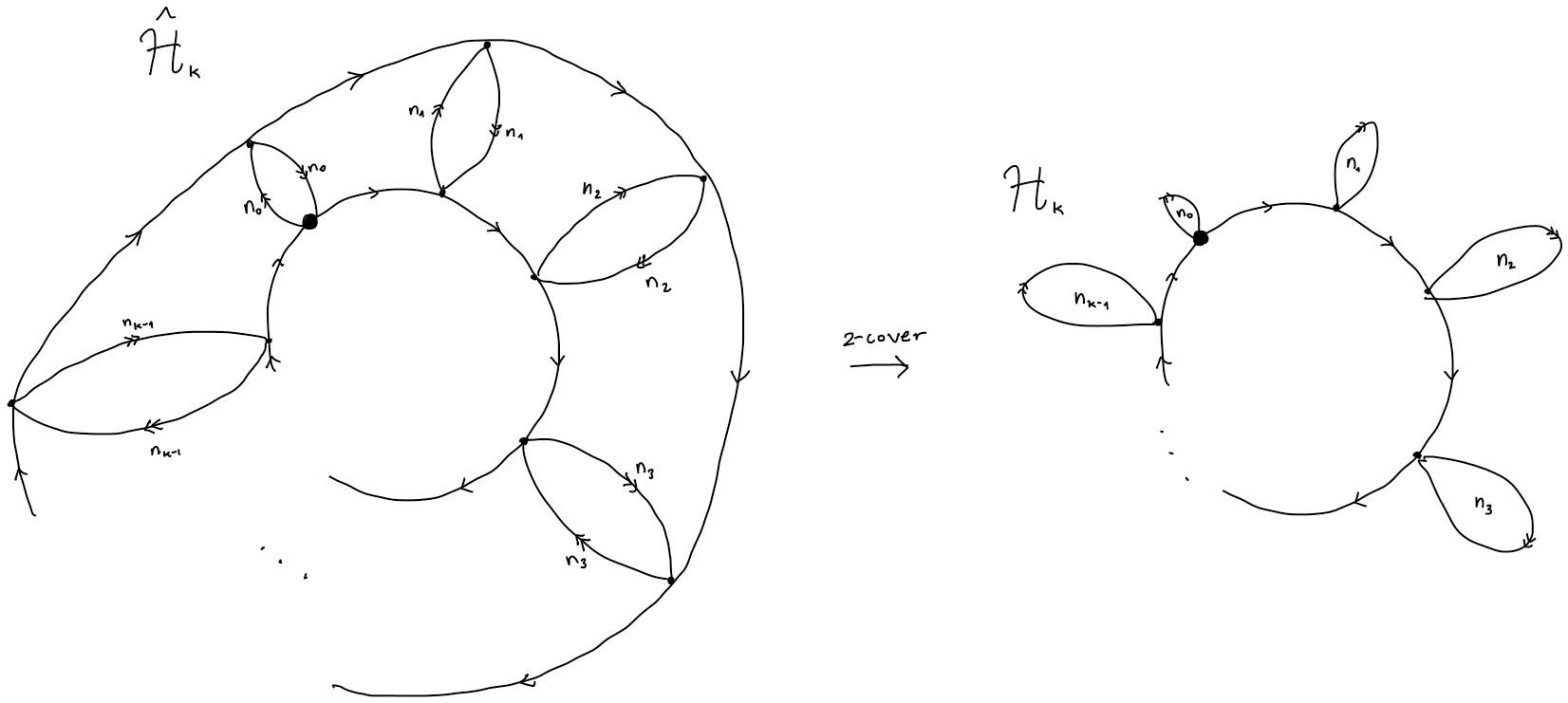}
    \caption{$\widehat{\mathcal{H}}_k$ is a double cover of $\mathcal{H}_k$}
    \label{fig:Hihat}
\end{figure}

We follow the viewpoint popularized by Stallings relating immersed graphs and subgroups of free groups \cite{Stallings83}. We do not mention the base-vertex below but it can be identified with a copy of $\{0\}$ when there is more than one vertex.

Let $\mathcal{F}$ be a bouquet of two circles directed and labeled by $a$ and $b$.
So $F = \pi_1\mathcal{F}$.

For $k \geq 0$, let $\mathcal{C}_k$ be the connected degree~$n_k$ cover of the $b$-circle in $\mathcal{F}$. So $\pi_1 \mathcal{C}_k = \langle b^{n_k}\rangle$.

Let $\mathcal{H}$ be the graph obtained from the ray $R = [0, \infty)$
by attaching  circles $\mathcal{C}_k$ along $k$, as in the right of Figure~\ref{fig:Hhat}.
So $H = \pi_1\mathcal{H}$.

Let $\mathcal{H}_k$ be the \emph{$k$-th summary} of $\mathcal{H}$ as illustrated at the right of Figure~\ref{fig:Hihat}. It is formed from $[0,k] / \{0\sim k\}$ by attaching a copy of $\mathcal{C}_j$ at $\{j\}$ for $0\leq j < k$.
The crucial consequence of multiplicativity of $\vec n$ is that 
$\mathcal H\rightarrow \mathcal F$ factors as 
$\mathcal H\rightarrow \mathcal H_k \rightarrow \mathcal F$ for each $k$.
Observe that
$H_k =\pi_1\mathcal H_k$ where the basepoint is bold.
 
These subgroups were introduced in
\cite{WiseDescending99} and further studied in \cite{ChongWise2022}
 which showed:
\begin{thm}\label{thm:H_separable}
Let $H$ and $H_k$ be associated to the multiplicative sequence $\vec n$.\\
Then $H=\cap_k H_k$.
Consequently,  $H$ is separable.
\end{thm}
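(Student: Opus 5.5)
We argue with the Stallings graphs described above, in two steps: first $H=\cap_k H_k$, then separability.

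\textbf{Inclusion $H\subseteq \cap_k H_k$.} Multiplicativity gives, for each $k$, a factorization $\mathcal H\rightarrow \mathcal H_k\rightarrow \mathcal F$: the ray $R$ wraps around the $a$-circle $[0,k]/\{0\sim k\}$, and each $\mathcal C_j$ with $j\geq k$ maps to the $b$-loop $\mathcal C_{j \bmod k}$. This is a covering of $\mathcal C_{j\bmod k}$, since $n_{j\bmod k}$ divides $n_j$. The map is basepoint preserving, so on $\pi_1$ we get $H\subseteq H_k$. Concretely, $a^{-i}b^{n_i}a^i$ for $i\geq k$ is a product of the generator $a^k$ and $a^{-r}b^{n_i}a^r$ with $r=i \bmod k$, and the latter is a power of $a^{-r}b^{n_r}a^r$.

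\textbf{Inclusion $\cap_k H_k\subseteq H$.} Let $w\in \cap_k H_k$ be a reduced word of length $m$, and pick $k>m$. Since $\mathcal H_k\rightarrow \mathcal F$ is an immersion, $w$ lifts to a closed reduced path at the basepoint of $\mathcal H_k$. The key step is to compare the $k$-th summary $\mathcal H_k$ with $\mathcal H$ near the basepoint $0$.
\begin{itemize}
\item The ball of radius $m$ about $0$ in $\mathcal H_k$ differs from the corresponding ball in $\mathcal H$ only in that the $a$-edge from $k-1$ returns to $0$, rather than continuing to $k$.
\item A closed path of length $m<k$ based at $0$ cannot traverse the whole $a$-segment $[0,k]$. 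Indeed, the $a$-net displacement (the exponent sum of $a$) is at most $m<k$.
\end{itemize}
We must also check that the lifted path cannot go around the $a$-cycle through the $\mathcal C_j$ loops. Those loops are attached at vertices and do not change the $a$-coordinate, so the projection to the $a$-coordinate $[0,k]/\{0\sim k\}$ gives a well-defined winding number. Any path of length less than $k$ has winding zero and stays in an arc. Therefore the path lifts to a closed path in $\mathcal H$, working in the portion $[0,k-1]$ of $R$ together with the loops $\mathcal C_j$ for $j<k$, which are identical in both graphs. Hence $w\in H$.

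\textbf{Separability.} Each $H_k$ is finitely generated, so by Marshall Hall's theorem it is separable in $F$. Concretely, $\mathcal H_k$ is a finite immersed graph, which completes to a finite cover of $\mathcal F$; that makes $H_k$ an intersection of finite index subgroups. An intersection of closed sets is closed, so $H=\cap_k H_k$ is separable.

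The main obstacle is making the lifting argument rigorous: checking that a short closed path in $\mathcal H_k$ never uses the identification $0\sim k$. The winding-number projection onto the $a$-coordinate handles this cleanly.
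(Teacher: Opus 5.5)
Your inclusion $H\subseteq\cap_k H_k$ and your separability step are correct. The reverse inclusion, however, has a genuine gap at the step ``winding zero, hence stays in an arc, hence lies in $[0,k-1]$ plus the loops.'' Winding number zero only rules out going all the way around the $a$-cycle. It does not rule out crossing the single edge joining $0$ to $k-1$ (the edge created by $0\sim k$), wandering among the vertices $k-1,k-2,\dots$ and the loops $\mathcal C_j$ attached there, and crossing back. Such a path has winding zero and lies in an arc, but that arc straddles $0$, and the path does not lift to $\mathcal H$, where $0$ is the end of the ray. Your first bullet is inaccurate for the same reason: the $m$-ball about $0$ in $\mathcal H_k$ contains $k-1,\dots,k-m$ and their loops, none of which lie in the $m$-ball of $\mathcal H$.

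Concretely, $ab^{n_{k-1}}a^{-1}=a^k\,(a^{-(k-1)}b^{n_{k-1}}a^{k-1})\,a^{-k}$ lies in $H_k$ but not in $H$. To see this, take the kernel of $F\to\integers$, $a\mapsto 1$, $b\mapsto 0$, with free basis $b_i=a^{-i}ba^i$ ($i\in\integers$). In this basis the element is $b_{-1}^{n_{k-1}}$, while $H\subseteq\langle b_i : i\ge 0\rangle$. Your argument never uses the sizes of the $n_j$, and it cannot work without them. For the constant sequence $n_i=1$, which is multiplicative in the paper's sense, $aba^{-1}$ has length $3<k$ and lies in every $H_k$, yet it is not in $H$. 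So your argument as written would prove a false statement.

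The missing ingredient is that the loops just below $k$ are long, which requires $n_i\to\infty$. This holds for $n_i=2\cdot(i!)$. The paper's sketch uses it implicitly when it asserts that large balls about the basepoint in $\widehat{\mathcal F}_k$ and $\widehat{\mathcal F}$ agree: the cycles $\mathcal C_j$ with $j$ near $k$ must be longer than the ball's diameter to look like the hanging tree at $0$ in $\widehat{\mathcal F}$.

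With this hypothesis, choose $k>m$ with $n_{k-m}>m$; this is possible because the sequence is nondecreasing and unbounded. Suppose the reduced closed lift of $w$ crossed the edge between $0$ and $k-1$. Lift its $a$-coordinate to $\integers$ and take a vertex $j$ of the excursion farthest behind $0$, so $k-m\le j\le k-1$. The path arrives at $j$ from $j+1$ and must leave toward $j+1$. Reducedness then forces it to run completely around $\mathcal C_j$ in between, costing $n_j\ge n_{k-m}>m$ edges, a contradiction. Hence the lift avoids that edge and lies in the subgraph common to $\mathcal H_k$ and $\mathcal H$, so $w\in H$.
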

\begin{proof}[Sketch of proof]
Let 
$\widehat {\mathcal F}$ be the based covering space of $\mathcal F$ with $\pi_1\widehat {\mathcal F} = H$,
and note that there is an embedding $\mathcal H \hookrightarrow \widehat {\mathcal F}$.
Define $\widehat {\mathcal F}_k$ similarly,
and note  $\mathcal H_k \hookrightarrow \widehat {\mathcal F}_k$ for each $k$.

Consider a path $\sigma$  representing an element of $F-H$, so $\sigma$ lifts to a non-closed path $\hat \sigma$ in $\widehat{\mathcal F}$.
It is here we use that $\mathcal H_k$ is a good ``summary''  of $\mathcal H$ in the sense that their large balls look the same for sufficiently large~$k$.
Because of this, 
the based lift of $\hat \sigma_k$ to $\widehat {\mathcal F}_k$ is also not closed for sufficiently large $k$. Hence $\sigma \notin H_k$.

 Each $H_k = \pi_1 \mathcal{H}_k$ is separable since it is a finitely generated subgroup of a free group. Hence  $H = \cap_k H_k$ is separable.
\end{proof}

We now describe index~2 subgroups $\widehat H=\widehat H(\vec n)$ of $H$, by building double covers of $\widehat {\mathcal H}$. 
Let $\widehat{\mathcal{H}}$ be the double cover of $\mathcal{H}$ obtained by gluing the connected double covers $\widehat{C_k}$ of $C_k$, to two rays along  copies of $k$, as in Figure~\ref{fig:Hhat}.
Let $\widehat{H} = \pi_1\widehat{\mathcal{H}}$. So $[H:\widehat{H}]=2$.

\begin{thm}
$\widehat{H}(\vec n)$ is separable in $F$
when $\vec n$ is multiplicative.
\end{thm}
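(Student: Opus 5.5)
The plan is to imitate the proof of Theorem~\ref{thm:H_separable}: write $\widehat H$ as an intersection of finitely generated subgroups $\widehat H_k\subset F$, each separable because finitely generated subgroups of free groups are separable (Hall's theorem). Then $\widehat H=\cap_k \widehat H_k$ is an intersection of closed sets, hence closed.

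\emph{Construction of $\widehat H_k$.} Take the double cover $\widehat{\mathcal H}_k$ of the $k$-th summary $\mathcal H_k$ drawn in Figure~\ref{fig:Hihat}. It is obtained from the double cover of the cycle $[0,k]/\{0\sim k\}$ by attaching the connected double cover $\widehat{\mathcal C}_j$ of $\mathcal C_j$ at the two lifts of $j$, for each $0\le j<k$. The double cover of the cycle is chosen so that it restricts over the ray portion to two disjoint copies of $[0,k]$. Since $\widehat{\mathcal C}_j$ is connected, it joins the two lifts of $j$, and so the graph is connected. This should be arranged so that the map $\mathcal H\to\mathcal H_k$ lifts to a map $\widehat{\mathcal H}\to\widehat{\mathcal H}_k$. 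Concretely, $\widehat H_k$ is the kernel of a homomorphism $H_k\to\integers/2$ whose restriction to $H$ is the homomorphism $H\to \integers/2$ defining $\widehat H$. That homomorphism sends each generator $a^{-i}b^{n_i}a^i$ to $1$. For $\widehat H_k$ we send these generators to $1$ and send $a^k$ to, say, $0$.

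\emph{The key step} is to check that $\widehat H=\cap_k\widehat H_k$.

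One inclusion is immediate. The lift $\widehat{\mathcal H}\to\widehat{\mathcal H}_k$, composed with $\widehat{\mathcal H}_k\to\mathcal F$, is a based map of graphs over $\mathcal F$, so $\widehat H\subset \widehat H_k$ for every $k$.

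For the reverse inclusion, let $g\in F-\widehat H$. If $g\notin H$, then $g\notin H_k\supset \widehat H_k$ for large $k$ by Theorem~\ref{thm:H_separable}. So suppose $g\in H-\widehat H$, i.e.\ $g$ maps to $1$ under $H\to\integers/2$.
\begin{itemize}
\item By Theorem~\ref{thm:H_separable}, $g\in H_k$ for every $k$.
\item The map $H\to H_k$ is inclusion, and the homomorphism $H_k\to\integers/2$ restricts to the one on $H$.
\item Hence $g$ maps to $1$ in $H_k$ as well, so $g\notin\widehat H_k$, for every $k$.
\end{itemize}
Alternatively, argue with lifts as in the sketch of Theorem~\ref{thm:H_separable}. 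Represent $g$ by a closed path $\sigma$ in $\mathcal H$; its lift to $\widehat{\mathcal H}$ is not closed. For $k$ larger than the length of $\sigma$, the balls of radius $|\sigma|$ about the basepoints of $\widehat{\mathcal H}$ and $\widehat{\mathcal H}_k$ agree, so the lift of $\sigma$ to $\widehat{\mathcal H}_k$ is also not closed.

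\emph{Main obstacle.} The delicate point is well-definedness and compatibility: the homomorphism $H_k\to\integers/2$ must restrict to the one defining $\widehat H$ on $H\subset H_k$. I would verify this combinatorially. A closed path in $\mathcal H$ maps to a closed path in $\mathcal H_k$. The parity of its lift is determined by how many times it traverses the $b$-edges of the circles $\mathcal C_j$, counted modulo the covering $\widehat{\mathcal C}_j\to\mathcal C_j$. Under $\mathcal H\to\mathcal H_k$, each circle $\mathcal C_j$ with $j\ge k$ maps onto $\mathcal C_{j \bmod k}$ by a covering of degree $n_j/n_{j\bmod k}$, which is an integer by multiplicativity. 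If that degree is even, the lift of $\mathcal C_j$ does not close in the same way, and compatibility fails.

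To avoid this, I would first check that the factorization $\mathcal H\to\mathcal H_k$ actually exists with the given covering maps. Failing that, I would define $\widehat H_k$ directly as the preimage of $\widehat H$'s parity under a different choice, for example the double cover of $\mathcal H_k$ in which the cycle lifts trivially and each $\widehat{\mathcal C}_j$ is chosen so that the induced parity on images of $\mathcal C_j$, $j\ge k$, is correct. Then I would verify that the maps $\widehat{\mathcal H}\to\widehat{\mathcal H}_k$ exist.
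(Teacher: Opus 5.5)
\textbf{There is a genuine gap, and it cannot be repaired.} The failing step is the inclusion $\widehat H\subseteq\widehat H_k$ that you call immediate. The compatibility problem you flag in your last paragraph really occurs, and no choice of double cover avoids it.

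Write $y_i=a^{-i}b^{n_i}a^i$. Let $\phi\colon H\to\integers/2$ be the homomorphism with kernel $\widehat H$, so $\phi(y_i)=1$ for every $i$ (recall $H$ is free on the $y_i$). For $i=qk+r$ with $0\le r<k$, in $H_k$ we have
\[
y_i=(a^k)^{-q}\,y_r^{\,n_i/n_r}\,(a^k)^q .
\]
Hence every homomorphism $\phi_k\colon H_k\to\integers/2$ satisfies $\phi_k(y_i)=(n_i/n_r)\,\phi_k(y_r)$, whatever its value on $a^k$.

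Suppose $\widehat H\subseteq\widehat H_k=\ker\phi_k$ and $b^{n_0}\notin\widehat H_k$. Then $\widehat H_k\cap H=\widehat H$, so $\phi_k|_H=\phi$. This forces $\phi_k(y_r)=1$ for $r<k$, and therefore requires $n_i/n_{i\bmod k}$ to be odd for \emph{every} $i$. So once one such quotient is even, the lift $\widehat{\mathcal H}\to\widehat{\mathcal H}_k$ fails to exist for \emph{every} double cover of $\mathcal H_k$. Your fallback has no freedom to work with: a circle has only the connected and the trivial double cover, and how the $a$-cycle lifts is invisible to parity.

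Your reverse inclusion does hold, but only for $k$ large, not for every $k$. Without the forward inclusion you get only
\[
\widehat H=H\cap\bigcup_K\bigcap_{k\ge K}\widehat H_k ,
\]
which is not an intersection of closed sets.

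\textbf{The statement itself is false under the hypothesis of the paper's final theorem.} That hypothesis says every integer divides some $n_i$, e.g.\ $n_i=2\cdot(i!)$. Here is why $\widehat H$ is then not separable.
\begin{enumerate}
\item Take any finite quotient $\pi\colon F\to Q$.
\item Choose $i>0$ a multiple of the order of $\pi(a)$, large enough that the order of $\pi(b)$ divides $n_i$.
\item Then $\pi(y_i)=1$ and $y_0y_i\in\widehat H$, so $\pi(b^{n_0})=\pi(y_0y_i)\in\pi(\widehat H)$.
\item Hence $b^{n_0}$ lies in the profinite closure of $\widehat H$, although $b^{n_0}=y_0\notin\widehat H$.
\end{enumerate}
A consequence is that $z=b^{-n_0}\underline b^{n_0}$ dies in every finite quotient of $F\underset{\widehat H}{*}\underline F$, so that double is not residually finite.

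\textbf{The paper's proof has the same gap.} It follows your route: it asserts, without checking, that $\widehat{\mathcal H}\to\mathcal F$ factors through $\widehat{\mathcal H}_k$. It then replaces your compatibility argument with the observation that no subgroup lies strictly between $\widehat H$ and $H$. So it rests on the same false step.

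\textbf{What your argument does prove} is a corrected statement. If for some $k$ all the quotients $n_i/n_{i\bmod k}$ are odd, then $\phi_k|_H=\phi$. So $\widehat H=H\cap\widehat H_k$ is separable, being an intersection of two separable subgroups.
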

One can follow the sketch of proof of Theorem~\ref{thm:H_separable} to show directly that $\widehat H=\cap_k \widehat H_k$, but we instead obtain it as a consequence of Theorem~\ref{thm:H_separable}.
\begin{proof}
    We have $[H:\widehat{H}] = 2$.
    And 
    $H = \cap_k H_k$ by Theorem~\ref{thm:H_separable}.
    Let $\widehat{\mathcal{H}}_k$ be the  double cover of $\mathcal{H}_k$  illustrated in Figure~\ref{fig:Hihat}, and $\widehat{H}_k = \pi_1\widehat{\mathcal{H}}_k$.
    To show separability of $\widehat{H}$ we will show $\widehat{H} = \cap_k \widehat{H}_k$.

Since $\widehat{\mathcal H} \rightarrow \mathcal F$ factors as $\widehat{\mathcal H }\rightarrow \widehat{\mathcal H_k} \rightarrow \mathcal F$
we have $\widehat{H} \subset  \widehat{H}_k$.
Consequently,  $\widehat{H} \subseteq \cap_k \widehat{H}_k$.
For the other direction, consider the inclusions $\widehat{H} \subseteq \cap_k \widehat{H}_k \subseteq \cap_k H_k = H$.
Observe that $\cap_k \widehat{H}_k \neq H$
since 
$b^{n_0} \in H$ but $b^{n_0} \notin \cap_k \widehat{H}_k$.
Since $[H:\widehat{H}] = 2$ there is no properly intermediate subgroup, so $H = \cap_k H_k$.
\end{proof}

Consider the double $G = F * _{\widehat{H}} \underline{F}$.
The residual finiteness of $G$
holds by Lemma~\ref{lem:rf_double}.

There is a Klein-bottle subgroup of $G$ for each $i$:
 $$K_i = \langle a^{-i} b^{n_i} a^i, \ \underline{a}^{-i}\underline{b}^{n_i}\underline{a}^{i} \rangle.$$

Let $Y = H \underset{\widehat{H}}{*} \underline{H}$.  Lemma~\ref{lem:NFT_and_doubles} gives an injection $Y\hookrightarrow G$. Regard $Y$  as a subgroup of $G$.

Let $\widehat{Y} = C_G(z)$ where $z = b^{-n_0} \underline{b}^{n_0}$.
Then by Lemma~\ref{lem:the_centralizer}, $\widehat{Y} \subset Y$ and $[Y:\widehat{Y}] = 2$.

There is a  torus subgroup $T_i \subset K_i \subset Y$ for each $i$.
Here $T_i=\widehat Y \cap K_i$, is the index~2 subgroup of $K_i$
acting by translations on the axis of $z$
in the proof of Lemma~\ref{lem:the_centralizer}.

$$T_i 
\ = \ 
\langle a^{-i}b^{2n_i}a^i
, \
a^{-i}b^{n_i}a^i
{\underline a}^{-i}{\underline b}^{-n_i}{\underline a}^i 
\
\rangle.$$

The groups we are interested in are the following doubles:
$$D \ = \ D(\vec{n}) \ = \ G  \underset{\widehat Y}{*} \underline{G}.$$

\begin{thm}\label{thm:D_rf}
$\widehat Y \subset G$ is separable.

Consequently
$D=G\underset{\widehat Y}{*} \underline{G}$ is residually finite.
\end{thm}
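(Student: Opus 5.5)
The plan is to obtain separability of $\widehat Y$ directly from the centralizer description, and then feed it into Lemma~\ref{lem:rf_double}. By definition $\widehat Y = C_G(z)$ with $z = b^{-n_0}\underline{b}^{n_0}$. So if $G$ is residually finite, Corollary~\ref{cor:centralizer_separable} immediately gives that $\widehat Y$ is separable in $G$. The first step is therefore to confirm that $G$ is residually finite, and we record the chain of earlier results that delivers this.

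First, $F$ is free, hence residually finite. Second, $\widehat H = \widehat H(\vec n)$ is separable in $F$ by the theorem preceding the definition of $G$, which rests on Theorem~\ref{thm:H_separable} and the multiplicativity of $\vec n$. Lemma~\ref{lem:rf_double} then shows that $G = F\underset{\widehat H}{*}\underline F$ is residually finite. This is exactly the hypothesis of Corollary~\ref{cor:centralizer_separable}, so $C_G(z)$ is closed in the profinite topology on $G$, that is, separable.

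We should also check that the identification $\widehat Y = C_G(z)$ is legitimate as stated. This is an application of Lemma~\ref{lem:the_centralizer} with $N=\widehat H\subset H\subset F$ and $h = b^{-n_0}$. Here $b^{n_0}\in H - \widehat H$, since $b^{n_0}$ does not lift to a closed loop in the connected double cover $\widehat{\mathcal C}_0$. The element $\ell = h\underline h^{-1}$ is then exactly $z$, and the lemma says $C_G(z)$ is an index-$2$ subgroup of $Y$. No further structure is needed for separability itself, because the centralizer argument is insensitive to the internal structure of $\widehat Y$.

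Finally, Lemma~\ref{lem:rf_double} applied to the residually finite group $G$ and its separable subgroup $\widehat Y$ shows that $D = G\underset{\widehat Y}{*}\underline G$ is residually finite. There is no real obstacle. The only substantive input is the separability of $\widehat H$ in $F$, already established; this is what makes $G$ residually finite, and that in turn is what makes the centralizer trick available. The point to be careful about is the order of the argument: residual finiteness of $G$ must be established before Corollary~\ref{cor:centralizer_separable} is invoked.
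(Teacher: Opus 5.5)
Your proposal is correct and follows the paper's proof. Separability of $\widehat Y=C_G(z)$ comes from Corollary~\ref{cor:centralizer_separable}, once $G$ is known to be residually finite via Lemma~\ref{lem:rf_double} and the separability of $\widehat H$; residual finiteness of $D$ then follows from Lemma~\ref{lem:rf_double} again, and you just spell out the step for $G$ that the paper records immediately before the theorem.
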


\begin{proof}
The second statement follows from the first by Lemma~\ref{lem:rf_double}.

The first statement holds by Lemma~\ref{lem:the_centralizer} and Corollary~\ref{cor:centralizer_separable}.
\end{proof}

Let $P_i=\langle K_i , \underline K_i \rangle$ be the $i$-th copy of the Promislow group in $D$.
Indeed, each
 $K_i \underset{T_i}{*} \underline{K}_i \rightarrow  G \underset{\widehat Y}{*} G$ is  injective by Lemma~\ref{lem:NFT_and_doubles}.
Hence $P_i \cong K_i\underset{T_i}{*} \underline K_i \cong P$.

We have now arrived at our main goal:

\begin{thm}
Suppose $\vec n$ has the property that for each  $r\geq 1$, we have $r!$ divides some $n_i$.
Then $D=D(\vec n)$ persistently contains $P$.

In particular, this holds for $n_i=2\cdot(i!)$.
\end{thm}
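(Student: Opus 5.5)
Given a finite index subgroup $D'\subset D$, we may shrink it to a finite index \emph{normal} subgroup $N\subset D$ of index $m$. The plan is to find some $i$ and an explicit copy of $P$ inside $N$, built as a conjugate of a "power" of $P_i$. Since $P$ is virtually $\integers^3$, the natural candidate is a subgroup generated by high powers of the generators of $K_i$ and $\underline K_i$. We will look for such a subgroup that is again isomorphic to $P$ and lies in $N$. For $g\in D$ we have $g^{m!}\in N$, because the image of $g$ in $D/N$ has order dividing $m!$. We then choose $i$ with $m!\mid n_i$ — or, to leave room for the extra powers we need, $(2m)!\mid n_i$ — which is possible by the hypothesis on $\vec n$.

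The key structural step uses the divisibility $n_j\mid n_i$ for $j\le i$ together with the index-$m$ condition. Write $u_i=a^{-i}b^{n_i}a^i$. Consider $K_i=\langle u_i,\underline u_i\rangle\subset Y$, where $u_i$ and $\underline u_i$ act as reflections on the axis $A$ of $z$. The torus $T_i$ is generated by $u_i^2$ and $u_i\underline u_i^{-1}$. Note that $u_i=(a^{-i}ba^i)^{n_i}$ is already an $n_i$-th power, so $u_i\in N$ and $\underline u_i\in N$ whenever $m!\mid n_i$. Hence $K_i\subset N$ outright. Since $D$ is a double, we also have $\underline{u_i}\in \underline G$ (the second copy of $G$ in $D$), and its $\underline{\ }$-copy $\underline{\underline u_i}$ is likewise an $n_i$-th power. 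Therefore all generators of $\underline K_i$ (the copy of $K_i$ in the second factor of $D$) also lie in $N$. It follows that $P_i=\langle K_i,\underline K_i\rangle\subset N\subset D'$. The isomorphism $P_i\cong P$ was established just before the theorem via Lemma~\ref{lem:NFT_and_doubles}.

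So the argument reduces to observing that each generator of $P_i$ is a conjugate of $b^{n_i}$, $\underline b^{n_i}$, or their copies in $\underline G$, and is therefore an $n_i$-th power. The main obstacle I expect is to verify that the generating set of $P_i$ really consists only of such powers, and does not, for instance, require separate elements such as $z$ or other elements of $T_i$ that are not visibly powers. If $P_i$ were generated by $K_i\cup\underline K_i$ but some auxiliary identification needed a non-power element, I would instead pass to $\langle u_i^{k},\underline u_i^{k}\rangle$ for a suitable $k$. I would then recheck, via Lemma~\ref{lem:NFT_and_doubles} with $A=\langle u_i^k,\underline u_i^k\rangle$ and $C=A\cap\widehat Y$, that the resulting double is again Promislow's group. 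Finally, for $n_i=2\cdot(i!)$, the requirement $r!\mid n_i$ holds by taking $i=r$.
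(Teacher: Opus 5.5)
Your argument is correct and is essentially the paper's: $P_i=\langle K_i,\underline K_i\rangle$ is generated by four $n_i$-th powers, and $d^{q!}$ lies in any index-$q$ subgroup $C$ for every $d\in D$, so any $i$ with $q!\mid n_i$ gives $P_i\subset C$. The paper writes $P_q$, using $n_q=2\cdot q!$, and skips the normal core since $d$ permutes the $q$ cosets of $C$. The obstacle you anticipate does not arise, because that generating set is the definition of $P_i$; this is fortunate, since your fallback $\langle u_i^k,\underline u_i^k\rangle$ would be cyclic for even $k$, as $u_i^2=\underline u_i^2$ in $G$. Also note that $\underline u_i$ lies in the first factor $G$ of $D$, not in $\underline G$: the generators of $\underline K_i$ are the images of $u_i$ and $\underline u_i$ under $G\to\underline G$.
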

\begin{proof}
For an index~$q$ subgroup $C\subset D$,
we have $d^{q!}\in C$ for each $d\in D$.
Hence $P_q \subset  C$.
\end{proof}

\section{A few remarks}
\begin{rem}[nonlinear]
Each $D=D(\vec n)$ is non-linear.
In fact, the subgroup $G\subset D$  is not linear.
Indeed, conjugating the left factor of $G$ by $a$ and the right factor of $G$ by $\underline a$,
one obtains an endomorphism that is not ``eventually injective''
(the same works for the double).
Hence these groups are not linear 
\cite{WiseEventuallyInjective2025}. See Problem~\ref{prob:linear_virtually_UPP}.
\end{rem}

\begin{rem}[amenable]
Finitely generated
linear
amenable groups are virtually solvable by Tits' alternative.
And in characteristic 0, they are
 virtually torsion-free by 
 Selberg \cite{Alperin1987}.
They are  virtually (torsion-free-nilpotent)-by-abelian, by a theorem of Mal\'cev \cite{Malcev1951}.
One can then show that they are virtually locally-indicable. 
It follows that they are virtually left-orderable, and hence virtually have  the UPP.

\end{rem}

\begin{rem}[uncountable]\label{rem:uncountable}
If we vary the sequence $\vec n$,
then we obtain  uncountably many examples that are residually finite but not virtually UPP.
They are distinguished up to isomorphism using  a homology computation,
as in \cite{ChongWise2022}.

One can also apply 
\cite{Bowditch1998} to see that there are uncountably many quasi-isometry classes. 
\end{rem}

\noindent {\bf Acknowledgement:} We are grateful to the referee for many helpful corrections.

\bibliographystyle{plain} 
\bibliography{wise} 

@article {Alperin1987,
    AUTHOR = {Alperin, Roger C.},
     TITLE = {An elementary account of {S}elberg's lemma},
   JOURNAL = {Enseign.Math.},
  FJOURNAL = {L'Enseignement Math\'ematique. Revue Internationale. 2e
              S\'erie},
    VOLUME = {33},
      YEAR = {1987},
    NUMBER = {3-4},
     PAGES = {269--273},
      ISSN = {0013-8584},
   MRCLASS = {20H20},
  MRNUMBER = {925989},
MRREVIEWER = {A.\ E.\ Zalesski\u i},
}

@article {Malcev1951,
    AUTHOR = {Mal\cprime{c}ev, A. I.},
     TITLE = {On some classes of infinite soluble groups},
   JOURNAL = {Mat. Sbornik N.S.},
  FJOURNAL = {Mat. Sbornik N.S.},
    VOLUME = {28/70},
      YEAR = {1951},
     PAGES = {567--588},
   MRCLASS = {20.0X},
  MRNUMBER = {43088},
MRREVIEWER = {K.\ A.\ Hirsch},
}

@misc{Ng2025,
      title={Virtual First Betti Number of {GGS} Groups}, 
      author={Andrew Ng},
      year={2025},
      eprint={2505.23269},
      archivePrefix={arXiv},
      primaryClass={math.GR},
      url={https://arxiv.org/abs/2505.23269} 
}

@article {BolerEvans1973,
    AUTHOR = {Boler, James and Evans, Benny},
     TITLE = {The free product of residually finite groups amalgamated along
              retracts is residually finite},
   JOURNAL = {Proc. Amer. Math. Soc.},
  FJOURNAL = {Proceedings of the American Mathematical Society},
    VOLUME = {37},
      YEAR = {1973},
     PAGES = {50--52},
      ISSN = {0002-9939,1088-6826},
   MRCLASS = {20E25},
  MRNUMBER = {306329},
MRREVIEWER = {L.\ Neuwirth},
       DOI = {10.2307/2038704},
       URL = {https://doi.org/10.2307/2038704},
}

@article {Gardam2021,
    AUTHOR = {Gardam, Giles},
     TITLE = {A counterexample to the unit conjecture for group rings},
   JOURNAL = {Ann. of Math. (2)},
  FJOURNAL = {Annals of Mathematics. Second Series},
    VOLUME = {194},
      YEAR = {2021},
    NUMBER = {3},
     PAGES = {967--979},
      ISSN = {0003-486X,1939-8980},
   MRCLASS = {20C07},
  MRNUMBER = {4334981},
MRREVIEWER = {E.\ Formanek},
       DOI = {10.4007/annals.2021.194.3.9},
       URL = {https://doi.org/10.4007/annals.2021.194.3.9},
}

@article {Bowditch1998,
    AUTHOR = {Bowditch, B. H.},
     TITLE = {Continuously many quasi-isometry classes of {$2$}-generator
              groups},
   JOURNAL = {Comment. Math. Helv.},
  FJOURNAL = {Commentarii Mathematici Helvetici},
    VOLUME = {73},
      YEAR = {1998},
    NUMBER = {2},
     PAGES = {232--236},
      ISSN = {0010-2571,1420-8946},
   MRCLASS = {20F32 (20F05)},
  MRNUMBER = {1611695},
MRREVIEWER = {Ilya\ Kapovich},
       DOI = {10.1007/s000140050053},
       URL = {https://doi.org/10.1007/s000140050053},
}

@article {Bowditch2000,
    AUTHOR = {Bowditch, B. H.},
     TITLE = {A variation on the unique product property},
   JOURNAL = {J. London Math. Soc. (2)},
  FJOURNAL = {Journal of the London Mathematical Society. Second Series},
    VOLUME = {62},
      YEAR = {2000},
    NUMBER = {3},
     PAGES = {813--826},
      ISSN = {0024-6107,1469-7750},
   MRCLASS = {20F65},
  MRNUMBER = {1794287},
MRREVIEWER = {Thomas\ Delzant},
       DOI = {10.1112/S0024610700001307},
       URL = {https://doi.org/10.1112/S0024610700001307},
}

@article {KonkeRaimbaultDunfield_2016,
    AUTHOR = {Kionke, Steffen and Raimbault, Jean},
     TITLE = {On geometric aspects of diffuse groups},
      NOTE = {With an appendix by Nathan Dunfield},
   JOURNAL = {Doc. Math.},
  FJOURNAL = {Documenta Mathematica},
    VOLUME = {21},
      YEAR = {2016},
     PAGES = {873--915},
      ISSN = {1431-0635,1431-0643},
   MRCLASS = {20F65 (20-04 20H15 22E40 57M07)},
  MRNUMBER = {3548136},
MRREVIEWER = {Nansen\ Petrosyan},
}

@article {WiseEventuallyInjective2025,
    AUTHOR = {Wise, Daniel T.},
     TITLE = {On eventually injective endomorphisms},
   JOURNAL = {J. Comb. Algebra},
  FJOURNAL = {Journal of Combinatorial Algebra},
    VOLUME = {9},
      YEAR = {2025},
    NUMBER = {1-2},
     PAGES = {121--127},
      ISSN = {2415-6302,2415-6310},
   MRCLASS = {20H20 (20E06 20E26)},
  MRNUMBER = {4876493},
       DOI = {10.4171/jca/93},
       URL = {https://doi.org/10.4171/jca/93},
}

@article {ChongWise2022,
    AUTHOR = {Chong, Hip Kuen and Wise, Daniel T.},
     TITLE = {An uncountable family of finitely generated residually finite
              groups},
   JOURNAL = {J. Group Theory},
  FJOURNAL = {Journal of Group Theory},
    VOLUME = {25},
      YEAR = {2022},
    NUMBER = {2},
     PAGES = {207--216},
      ISSN = {1433-5883},
   MRCLASS = {20E26 (20E22 20F05)},
  MRNUMBER = {4388367},
MRREVIEWER = {Valeriy G. Bardakov},
       DOI = {10.1515/jgth-2021-0094},
       URL = {https://doi-org.proxy3.library.mcgill.ca/10.1515/jgth-2021-0094},
}

@article {RipsSegev87,
    AUTHOR = {Rips, Eliyahu and Segev, Yoav},
     TITLE = {Torsion-free group without unique product property},
   JOURNAL = {J. Algebra},
  FJOURNAL = {Journal of Algebra},
    VOLUME = {108},
      YEAR = {1987},
    NUMBER = {1},
     PAGES = {116--126},
      ISSN = {0021-8693},
     CODEN = {JALGA4},
   MRCLASS = {20F06 (20C07)},
  MRNUMBER = {MR887195 (88g:20071)},
MRREVIEWER = {Stephen J. Pride},
}

@preamble{
   "\def\cprime{$'$} "
}

@preamble{
   "\def\polhk#1{\setbox0=\hbox{#1}{\ooalign{\hidewidth
    \lower1.5ex\hbox{`}\hidewidth\crcr\unhbox0}}} "
}

@article {Baumslag63b,
   AUTHOR = {Baumslag, Gilbert},
    TITLE = {On the residual finiteness of generalised free products of
             nilpotent groups},
  JOURNAL = {Trans. Amer. Math. Soc.},
   VOLUME = {106},
     YEAR = {1963},
    PAGES = {193--209},
  MRCLASS = {20.52},
 MRNUMBER = {26 #2489},
   MRREVR = {Hanna Neumann},
}

@ARTICLE{Stallings83,
   author = {Stallings, John R.},
   title = {Topology of finite graphs},
   journal = {Invent. Math.},
   year = {1983},
   volume = {71},
   number = {3},
   pages = {551--565},
}

@article {WiseDescending99,
   AUTHOR = {Wise, Daniel T.},
    TITLE = {A continually descending endomorphism of a finitely generated
             residually finite group},
  JOURNAL = {Bull. London Math. Soc.},
 FJOURNAL = {The Bulletin of the London Mathematical Society},
   VOLUME = {31},
     YEAR = {1999},
   NUMBER = {1},
    PAGES = {45--49},
     ISSN = {0024-6093},
    CODEN = {LMSBBT},
}

@BOOK{LS77,
   author = {Lyndon, Roger C. and Schupp, Paul E.},
   title = {Combinatorial group theory},
   publisher = {Springer-Verlag},
   year = {1977},
   address = {Berlin},
   note = {Ergebnisse der Mathematik und ihrer Grenz\-gebiete, Band 89},
}

\end{document}